\newtheorem{thm}{Theorem}[section]
\newtheorem{lem}[thm]{Lemma}
\newtheorem{cor}[thm]{Corollary}
\newcommand{\thmref}[1]{Theorem~\ref{#1}}
\newcommand{\lemref}[1]{Lemma~\ref{#1}}
\theoremstyle{remark}
\newcommand{\nequiv}{\not\equiv}
\renewcommand{\geq}{\geqslant}
\renewcommand{\leq}{\leqslant}
\newcommand{\medmatrix}[4]{\Bigl(\begin{matrix} #1 \!& \!#2 \\[-4pt] #3 \!&\! #4 \end{matrix}\Bigr)}
\begin{document}

\baselineskip=17pt

\title
{On the parity of the Fourier coefficients of hauptmoduln $j_{N}(z)$ and $j_{N}^{+}(z)$ }
\author[M. Kumari and S. K. Singh]{Moni Kumari and Sujeet Kumar Singh}
\address{National Institute of Science Education and Research, HBNI, Bhubaneswar, Via-Jatni, Khurda, Odisha, 752050, India.}

\email{moni.kumari@niser.ac.in}
\email{sujeet.singh@niser.ac.in}
  
\date{\today}

\subjclass[2010]{Primary 11F03; Secondary 11F30, 11F33.}

\keywords{Modular functions, Fourier coefficients, congruences.}
\begin{abstract}
We obtain some interesting results about the parity of the Fourier coefficients of hauptmoduln $j_{N}(z)$ and $j_{N}^{+}(z),$ for some positive integers $N$. We use elementary methods and the techniques of O. Kolberg's proof for the parity of the partition function.
\end{abstract}

\maketitle

\section{Introduction}
The Klein's j-function plays a central role in several areas of number theory, which is defined by:
\begin{equation}\label{jfun}
j(z):=\frac{E_{4}^{3}(z)}{\Delta(z)}= \frac{1}{q}+744+\sum_{n=1}^{\infty}c(n)q^{n},
\end{equation}
where $z \in \mathbb{H}$ and $q:=e^{2\pi iz}$. Note that, in above $\Delta(z) = q \prod_{n = 1}^\infty(1-q^n)^{24}$ is the Ramanujan's Delta function and $E_4(z)$ is the normalized Eisenstein series of weight $4$.

In the theory of modular functions, the study of congruences for Fourier coefficients has a long history. In particular, congruences for the coefficients $c(n)$ of the Klein's $j$-function have been studied by many mathematicians like, O. Kolberg \cite{Kolberg2}, J. Lehner \cite{Lehner}, K. Ono \cite{Ono}, J. P. Serre \cite{Serre}, among many others. In \cite{Claudia}, C. Alfes says that ``Surprisingly little is known about the behavior of the coefficients $c(n)$ modulo a prime''. Regarding the parity of $c(n),$ it is easy to check that $c(n)$ is even whenever $n\nequiv 7 \pmod{8}.$ This motivates to ask the parity of $c(n)$ in the arithmetic progression $n\equiv7 \pmod{8}.$ A large amount of work has recently been done in this direction, by a variety of methods (see, \cite{Claudia, Murty, Ono}). In 2012, Ono and Ramsey \cite{Ono} proved that there are infinitely many $d$  such that 
$c(nd^2)$ is even whenever $n\equiv 7 \pmod{8}.$ Also, by using the mod $p$ analogue of Atkin-Lehner's theorem and the generalized Borcherds product, they proved that for any $n\equiv 7\pmod{8}$, if there exists one odd integer $d$ such that $c(nd^2)$ is odd, then there are infinitely many odd integers $m$ such that $c(nm^2)$ is odd.
Recently, M. R. Murty and R. Thangadurai \cite{Murty}, in the spirit of Kolberg's \cite{Kolberg1} proof of parity of partition function, gave a range in which a suitable $n\equiv 7 \pmod{8}$ can be chosen such that $c(n)$ is odd (respectively, even).  According to \cite{Claudia}, it is expected that the odd coefficients are supported on the ``one half'' of the arithmetic progression $n\equiv 7 \pmod{8}.$

Motivated from the work of Murty and Thangadurai \cite{Murty}, in this paper we obtain the parity of the Fourier coefficients of hauptmoduln $j_N(z)$ and $j_{N}^+(z),$ defined with respect to the congruence subgroup $\Gamma_0(N)$ and the Fricke group $\Gamma_{0}^+(N)$ respectively, for some particular values of $N$. Here the group $\Gamma_{0}^{+}(N)$ is generated by $\Gamma_{0}(N)$ and the Artin-Lehner involution $W_N=\medmatrix {0} {-1} {N} {0}.$
For $N=2,3,4,5,7$ and $13$ the groups $\Gamma_{0}(N)$ and $\Gamma_{0}^+(N)$ have genus zero and the corresponding hauptmoduln $j_{N}(z)$ and $j_{N}^{+}(z)$ (see, \cite{Mats}) can be expressed by using the Dedekind $\eta$-function $\eta(z):=q^{\frac{1}{24}}\prod_{n=1}^{\infty}(1-q^n)$ in the following way.
\begin{equation}\label{lN}
j_{N}(z):= \left(\frac{\eta(z)}{\eta(Nz)} \right)^{\frac{24}{N-1}}+\frac{24}{N-1},
\end{equation}

\begin{equation}\label{jN+}
j_{N}^{+}(z):= \left(\frac{\eta(z)}{\eta(Nz)} \right)^{\frac{24}{N-1}}+\frac{24}{N-1}+   N^{\frac{12}{N-1}}\left(\frac{\eta(Nz)}{\eta(z)} \right)^{\frac{24}{N-1}}.
\end{equation}

For any positive integer $N$, we always denote the $n$-th Fourier coefficients of $j_{N}(z)$ and $j_{N}^{+}(z)$ by $c_{N}(n)$ and 
$c_{N}^{+}(n)$ respectively. From the definitions \eqref{lN} and \eqref{jN+}, it is clear that $c_{N}(n)$ and $c_{N}^{+}(n)$ are integers. Also, note that for 
 $N=2,3,4,5,7,$ and $13$
\begin{equation*}
j_{N}^{+}(z) = j_{N}(z)+N^{\frac{12}{N-1}}\left(\frac{\eta(Nz)}{\eta(z)} \right)^{\frac{24}{N-1}}.
\end{equation*}
The above equation suggests that the method used to study the Fourier coefficients $c_{N}^+(n)$ may also be applicable to get similar results for $c_{N}(n)$. In our setting, regarding the parity, this is exactly true.
Therefore, we study the parity of the coefficients $c_{N}^{+}(n)$ for $N=2,3,4,7$ and give a detail proof of our statements in sections $2$-$4$. By using the same techniques, we have obtained the analogous results for $c_{N}(n)$, which is listed in the table given in section $5$. Note that for $N=5$ and $13$, similar methods are applicable for the study of the parity of $c_N(n)$ and we have included the result in the table.  To the best of our knowledge, this is the first draft which deals with parity of coefficients of hauptmoduln for congruence subgroups as well as for Fricke groups.

\section{Parity of the Fourier coefficients of $j_{2}^{+}(z)$ and $
j_{4}^{+}(z)$} 
In this section, first we prove that the Fourier coefficients of $j(z), j_2^+(z)$ and $j_4^+(z)$ have the same parity. Then we use the earlier results of \cite{Murty} for $j(z)$ function to show the existence of infinitely many even and odd values for $c_{2}^{+}(n)$ (and also for $c_{4}^{+}(n)$) in the arithmetic progression $n\equiv 7\pmod{8}$.

\begin{thm}\label{congruence24} 
Let $c(n), c_2^+(n)$ and $c_4^+(n)$ denotes the $n$-th Fourier coefficients of $j(z), j_2^+(z)$ and $j_4^+(z)$ respectively. Then we have 
\begin{equation}
 c(n)\equiv c_{2}^{+}(n)\equiv c_{4}^{+}(n)\pmod{2}.
 \end{equation}
\end{thm}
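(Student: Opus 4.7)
The plan is to reduce all three $q$-series to the same expression modulo $2$, using only the Frobenius-type congruence $(1-q^n)^{2^k} \equiv 1 - q^{2^k n} \pmod 2$ together with the explicit eta-quotient formulas \eqref{jN+}. Once each of $j(z)$, $j_2^+(z)$, and $j_4^+(z)$ is shown to reduce modulo $2$ to $q^{-1}\prod_{n\geq 1}(1-q^{2n})^{-12}$, the claimed congruence on coefficients is immediate.

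For $j(z)$ itself, I would start from $j = E_{4}^{3}/\Delta$ and note that $E_{4} = 1 + 240\sum_{n\geq 1}\sigma_{3}(n)q^n \equiv 1 \pmod 2$, so $j(z) \equiv 1/\Delta(z) \pmod 2$. Since $\Delta(z) = q\prod(1-q^n)^{24}$ and $(1-q^n)^{24} \equiv (1-q^{2n})^{12} \pmod 2$, this yields
\begin{equation*}
j(z) \;\equiv\; q^{-1}\prod_{n\geq 1}(1-q^{2n})^{-12} \pmod 2.
\end{equation*}

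Next, from \eqref{jN+} with $N=2$, the expression $j_{2}^{+}(z) = (\eta(z)/\eta(2z))^{24} + 24 + 2^{12}(\eta(2z)/\eta(z))^{24}$ splits into three pieces. Because $24$ and $2^{12}$ are even and $(\eta(2z)/\eta(z))^{24}$ is an integer $q$-series, the second and third summands vanish mod $2$. The first summand equals $q^{-1}\prod(1-q^n)^{24}/(1-q^{2n})^{24}$, which by the same $(1-q^n)^{24}\equiv (1-q^{2n})^{12}$ congruence collapses to $q^{-1}\prod(1-q^{2n})^{-12}$, matching $j(z)\pmod 2$. The case $N=4$ is structurally identical: the constant $8$ and the factor $4^4=256$ are even, and the main $\eta$-quotient $q^{-1}\prod(1-q^n)^{8}/(1-q^{4n})^{8}$ reduces, via $(1-q^n)^{8}\equiv(1-q^{4n})^{2}\pmod 2$, to $q^{-1}\prod(1-q^{4n})^{-6}$, which is again congruent to $q^{-1}\prod(1-q^{2n})^{-12}\pmod 2$ since $(1-q^{2n})^{2}\equiv 1-q^{4n}\pmod 2$.

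I do not anticipate a serious obstacle here: the whole argument is formal and reduces to repeated application of a single mod-$2$ identity. The only bookkeeping point is to verify integrality of $(\eta(Nz)/\eta(z))^{24/(N-1)}$, so that multiplication by $2^{12}$ or $4^4$ truly produces an even $q$-series; this is transparent from $\eta(Nz)/\eta(z) = q^{(N-1)/24}\prod(1-q^{Nn})/(1-q^n)$, where the chosen power clears the fractional exponent on $q$ and the remaining infinite product expands with integer coefficients.
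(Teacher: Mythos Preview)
Your proposal is correct and follows essentially the same approach as the paper: reduce each of $j$, $j_2^+$, $j_4^+$ modulo $2$ to a common $\eta$-quotient expression using the Frobenius congruence and $E_4\equiv 1\pmod 2$. The only cosmetic difference is that the paper takes $1/\Delta(z)=q^{-1}\prod(1-q^n)^{-24}$ as the common target (reaching it via the factorization $(1-q^n)^{24}/(1-q^{2n})^{24}=1/(1+q^n)^{24}\equiv 1/(1-q^n)^{24}$), whereas you take $q^{-1}\prod(1-q^{2n})^{-12}$; these two expressions are of course congruent modulo $2$.
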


\begin{proof}
From the definition of the function $j_2^+(z)$ given in \eqref{jN+}, we have
\begin{equation}\label{j2+}
j_{2}^{+}(z) \equiv \left(\frac{\eta(z)}{\eta(2z)} \right)^{24} \pmod{2}.
\end{equation}
We know that
\begin{equation}\label{j2+id1}
\left(\frac{\eta(z)}{\eta(2z)} \right)^{24}= \frac{1}{q}\prod_{n=1}^{\infty}\frac{(1-q^n)^{24}}{(1-q^{2n})^{24}} = \frac{1}{q}\prod_{n=1}^{\infty}\frac{1}{(1+q^{n})^{24}}.
\end{equation}
Using the binomial theorem and the fact that $(1+q^n)\equiv(1-q^n)\pmod{2}$, an easy calculation yields 
\begin{equation*}
\frac{1}{q}\prod_{n=1}^{\infty}\frac{1}{(1+q^{n})^{24}} \equiv\frac{1}{q}\prod_{n=1}^{\infty}\frac{1}{(1-q^{n})^{24}}
\equiv\frac{1}{\Delta(z)}\pmod{2}.
\end{equation*}
Combining \eqref{j2+} and \eqref{j2+id1} together with the above congruence gives
\begin{equation}\label{j2+mod2}
j_{2}^{+}(z) \equiv \frac{1}{\Delta(z)}\pmod{2}.
\end{equation}
By using similar arguments as above, we show that
\begin{equation}\label{j4+mod2}
j_{4}^{+}(z)\equiv \frac{1}{\Delta(z)}\pmod{2}.
\end{equation}
It is well-known that $E_{4}^{3}(z)\equiv 1 \pmod{2}$ and using it in \eqref{jfun}, we get
\begin{equation}\label{jfunction}
j(z)\equiv \frac{1}{\Delta(z)}\pmod{2}.
\end{equation}
\noindent
Combining \eqref{j2+mod2}, \eqref{j4+mod2} and \eqref{jfunction}, we get
$$j(z)\equiv j_{2}^{+}(z)\equiv j_{4}^{+}(z) \pmod{2},$$
which completes the proof.
\end{proof}

In the literature there are many results about the parity of the coefficients $c(n)$ of the Klein's $j$-function. From the above theorem the corresponding results holds good for $c_{2}^{+}(n)$ and $c_{4}^{+}(n)$. In particular, using \cite[Theorem 1.1]{Murty} and \cite[Theorem 1.3]{Murty}, a direct application of \thmref{congruence24} gives infinitely many intervals, each of which contains an integer $n\equiv 7\pmod{8}$ such that $c_{2}^{+}(n)$ (and $c_4^+(n)$) is even (respectively, odd). More precisely, we get the following results.

\begin{cor}\label{2.1}
For $N=2,4$ and for every positive integer $t\geq 1,$
\begin{enumerate}[label=(\alph{*})]
\item
the interval $I_{N,t}:=[t,4t(t+1)-1]$ contains an integer $n\equiv 7\pmod{8}$ with $c_{N}^{+}(n)$ odd.
\item
the interval $J_{N,t}:=[16t-1,(4t+1)^2-1]$ contains an integer $n\equiv 7\pmod{8}$ with $c_{N}^{+}(n)$ even.
\end{enumerate}
In particular, $c_{N}^{+}(n)$ takes both even and odd values infinitely often in the arithmetic progression $n\equiv 7\pmod{8}.$
\end{cor}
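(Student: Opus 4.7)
The plan is to derive \corref{2.1} as a direct consequence of \thmref{congruence24} combined with the corresponding results for the Klein $j$-function already established in \cite{Murty}. Indeed, \cite[Theorem 1.1]{Murty} and \cite[Theorem 1.3]{Murty} state precisely the two assertions (a) and (b), but with the coefficients $c_N^+(n)$ replaced by $c(n)$: for every $t\geq 1$ the interval $[t,4t(t+1)-1]$ contains some $n\equiv 7\pmod{8}$ with $c(n)$ odd, and the interval $[16t-1,(4t+1)^2-1]$ contains some $n\equiv 7\pmod{8}$ with $c(n)$ even.

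Given these two inputs, the proof of \corref{2.1} collapses to a single observation. By \thmref{congruence24}, for $N\in\{2,4\}$ one has $c(n)\equiv c_N^+(n)\pmod{2}$ for every $n\geq 1$. Hence the integer $n$ furnished by \cite[Theorem 1.1]{Murty} inside $I_{N,t}$ automatically satisfies $c_N^+(n)$ odd, which proves part (a); likewise the integer $n$ produced by \cite[Theorem 1.3]{Murty} inside $J_{N,t}$ satisfies $c_N^+(n)$ even, proving part (b).

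For the final \emph{in particular} clause, I would simply note that the left endpoints of $I_{N,t}$ and of $J_{N,t}$ tend to infinity with $t$, so the witnesses $n$ produced for successive values of $t$ form unbounded sets; consequently there must be infinitely many distinct $n\equiv 7\pmod{8}$ at which $c_N^+(n)$ is odd, and similarly infinitely many at which it is even.

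There is essentially no obstacle in this argument: all of the delicate analytic work (the explicit construction of the two intervals, based on manipulations of $1/\Delta(z)$ in the spirit of Kolberg) is already carried out in \cite{Murty}, and \thmref{congruence24} transfers each individual parity conclusion index by index. The only point worth verifying explicitly is that the results cited from \cite{Murty} are stated in the form above and apply uniformly for all $t\geq 1$, which is the case.
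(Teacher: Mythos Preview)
Your proposal is correct and follows precisely the approach of the paper: the corollary is obtained by combining \thmref{congruence24} with \cite[Theorem 1.1]{Murty} and \cite[Theorem 1.3]{Murty}, transferring the parity statements for $c(n)$ to $c_N^{+}(n)$ for $N=2,4$ index by index. Your remark about the left endpoints tending to infinity to justify the ``in particular'' clause is a reasonable addition that the paper leaves implicit.
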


\section{Parity of the Fourier coefficients of $j_{3}^{+}(z)$}
From the definition given in \eqref{jN+}, we have
\begin{align}\label{j3+id1}
j_{3}^{+}(z)&= \left(\frac{\eta(z)}{\eta(3z)} \right)^{12}+12+3^{6}\left(\frac{\eta(3z)}{\eta(z)} \right)^{12}
\equiv\left(\frac{\eta(z)}{\eta(3z)} \right)^{12}+\left(\frac{\eta(3z)}{\eta(z)} \right)^{12} \pmod{2}.
\end{align}
In view of the right hand side of the above congruence, we introduce two functions $f_{3}^{+}$ and $g_{3}^{+}$, defined by
\begin{equation}\label{f3+def}
f_{3}^{+}(z)=\left(\frac{\eta(z)}{\eta(3z)}\right)^{12} =\sum\limits_{n \geq -1} a_3^+(n)q^n,
\end{equation}
and
\begin{equation}\label{g3+def}
g_{3}^{+}(z)=\left(\frac{\eta(3z)}{\eta(z)}\right)^{12}=\sum\limits_{n \geq 1} b_3^+(n)q^n.
\end{equation}
Here $a_{3}^{+}(n)$ and $b_{3}^{+}(n)$ are the $n$-th Fourier coefficients of $f_{3}^{+}$ and $g_{3}^{+}$ respectively.
By using the product formula for eta function and simplifying further, we have
\begin{align}\label{f3}
 f_{3}^{+}(z)=\frac{1}{q}\prod_{n=1}^{\infty}\frac{(1-q^n)^{12}}{(1-q^{3n})^{12}}\equiv \sum_{n=-1}^{\infty}a_{3}^{+}(4n+3)q^{4n+3} \pmod{2}.
 \end{align}
Similarly
 \begin{align}\label{g3}
 g_{3}^{+}(z)\equiv \sum_{n=1}^{\infty}b_{3}^{+}(4n+1)q^{4n+1} \pmod{2}.
 \end{align}
We note that $a_{3}^{+}(n)$ and $b_{3}^{+}(n)$ are even whenever $n\nequiv 3\pmod{4}$ and 
$n\nequiv 1\pmod{4}$ respectively. Therefore, from \eqref{j3+id1} we have
$$c_{3}^{+}(n)\equiv a_{3}^{+}(n)+b_{3}^{+}(n)\pmod{2},$$
 for all $n\geq 1.$
More precisely,
\[ c_{3}^{+}(n) \equiv
   \begin{cases}
    a_{3}^{+}(n) \pmod{2} & ~\text{if}~ n\equiv 3\pmod{4},\\
    b_{3}^{+}(n) \pmod{2} & ~\text{if}~ n\equiv 1 \pmod{4},\\
    0 ~~~~~~\pmod{2} &~ \text{otherwise}.\\
   \end{cases} 
     \]
\noindent
For $n\equiv 0,2 \pmod{4},~ c_{3}^{+}(n)$ is even and therefore, it is natural to ask the parity of 
$c_{3}^{+}(n)$ in the arithmetic progressions $n\equiv 1\pmod{4}$ and $n\equiv 3\pmod{4}.$ Furthermore, the above congruence shows that it is sufficient to study the parity of $a_3^+(n)$ and $b_3^+(n)$. The following theorems give the answer.

\begin{thm}\label{a3+theorem}
Let $t$ be a positive integer. 
\begin{enumerate}[label=(\alph{*})]
\item
Assume $3t(t+1)$ is not of the form $l(l+1)$ for any 
$l \in \mathbb{N}.$ Then the interval $I_{3,t}:=[12t-1,6t(t+1)-1]$ contains an integer $n\equiv 3\pmod{4}$, with $a_3^+(n)$ odd. 
  \item 
 Assume $6t(2t+1)$ is not of the form $l(l+1)$ for any 
$l \in \mathbb{N}.$ Then the interval $J_{3,t}:=[24t-1,12t(2t+1)-1]$ contains an integer  $n\equiv 3\pmod{4}$, such that $a_3^+(n)$ is even. 
 \end{enumerate}
 In particular, there are infinitely many integers $n\equiv 3\pmod{4}$ for which $c_3^+(n)$ is even (respectively, odd).
\end{thm}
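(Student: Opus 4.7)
The plan is to derive a single mod-$2$ master congruence and then specialise it to the two ranges, after which parts (a) and (b) reduce to an elementary parity count.

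Starting from $(1-q^n)^{12}\equiv(1-q^{4n})^3\pmod 2$ together with Jacobi's identity $\prod_{n\geq 1}(1-q^n)^3=\sum_{k\geq 0}(-1)^k(2k+1)q^{k(k+1)/2}$, one obtains
\[
\prod_{n\geq 1}(1-q^n)^{12}\equiv\alpha(q)\pmod 2\quad\text{and}\quad\prod_{n\geq 1}(1-q^{3n})^{12}\equiv\beta(q)\pmod 2,
\]
where $\alpha(q):=\sum_{k\geq 0}q^{2k(k+1)}$ and $\beta(q):=\sum_{k\geq 0}q^{6k(k+1)}$. Substituting into \eqref{f3+def} and clearing the $q^{-1}$ produces the master congruence $q\,\beta(q)\,f_{3}^{+}(z)\equiv\alpha(q)\pmod 2$, and extracting the coefficient of $q^N$ yields, for every $N\geq 0$,
\[
\sum_{\substack{k\geq 0\\ 6k(k+1)\leq N}}a_{3}^{+}\bigl(N-1-6k(k+1)\bigr)\equiv[\,N=2l(l+1)\text{ for some }l\geq 0\,]\pmod 2.
\]

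For part (a) I would apply this identity with $N=6t(t+1)$. The condition $N=2l(l+1)$ is equivalent to $l(l+1)=3t(t+1)$, so the hypothesis kills the right-hand side. The valid range is $0\leq k\leq t$, and a short simplification gives $N-1-6k(k+1)=6(t-k)(t+k+1)-1$; the term $k=t$ contributes $a_{3}^{+}(-1)=1$ (the leading coefficient of $f_{3}^{+}$), while the remaining $t$ terms have indices filling $I_{3,t}$ monotonically from $6t(t+1)-1$ at $k=0$ down to $12t-1$ at $k=t-1$. Each such index is $\equiv 3\pmod 4$ because $(t-k)$ and $(t+k+1)$ have opposite parity, making $6(t-k)(t+k+1)$ divisible by $4$. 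Since these $t$ remaining terms must sum to $1\pmod 2$, at least one of them is odd, proving (a).

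Part (b) follows the same template with $N=12t(2t+1)$: the condition $6k(k+1)\leq N$ becomes $k\leq 2t$ since $2t(2t+1)$ is itself pronic, so the sum has $2t+1$ terms. The hypothesis kills the right-hand side, the term $k=2t$ again contributes $a_{3}^{+}(-1)=1$, and the remaining $2t$ terms have indices $12t(2t+1)-1-6k(k+1)$ sweeping $J_{3,t}$ from $12t(2t+1)-1$ down to $24t-1$ (all $\equiv 3\pmod 4$). Their values sum to $1\pmod 2$ yet there are an even number of them, so they cannot all be odd; at least one is even, proving (b). For the ``in particular'' statement, both Diophantine conditions $l(l+1)=3t(t+1)$ and $l(l+1)=6t(2t+1)$ reduce (via $x=2l+1$) to the Pell-like equation $x^2-3y^2=-2$, whose solutions grow exponentially; hence infinitely many $t\in\mathbb{N}$ satisfy each hypothesis, and combining with $c_{3}^{+}(n)\equiv a_{3}^{+}(n)\pmod 2$ for $n\equiv 3\pmod 4$ (recorded just before the theorem) finishes everything. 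The main technical point---rather than a genuine obstacle---is the clean mod-$2$ reduction of the eta-quotient to $\alpha(q)/\beta(q)$ and the bookkeeping that identifies the summation indices with the stated intervals and parity class; once this is in place, (a) and (b) are a one-line counting argument.
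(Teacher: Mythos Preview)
Your proof is correct and follows essentially the same route as the paper: you derive the same mod-$2$ master congruence (the paper's \lemref{a3+cong}), specialise to $N=6t(t+1)$ for (a) and $N=12t(2t+1)$ for (b), peel off the $a_3^+(-1)=1$ term, and finish with the identical parity count. Your explicit Pell-equation justification of the ``in particular'' clause and the parity check $6(t-k)(t+k+1)\equiv 0\pmod 4$ are details the paper leaves implicit, but the underlying argument is the same.
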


The next statement is an analogous result of the Fourier coefficients $b_3^+(n).$
\begin{thm}\label{b3+theorem}
Let $t$ be a positive integer. 
\begin{enumerate}[label=(\alph{*})]
\item
If $t(t+1)$ is not of the form $3l(l+1)$ for any $l \in \mathbb{N},$ then
 the interval $I'_{3,t}:=[4t+1,2t(t+1)+1]$ contains an integer $n\equiv 1\pmod{4}$, such that $b_3^+(n)$ is odd. 
 \item
 If $2t(2t+1)$ is not of the form $3l(l+1)$ for any $l \in \mathbb{N},$ then
the interval $J'_{3,t}:=[8t+1,4t(2t+1)+1]$ contains an integer $n\equiv 1\pmod{4}$, such that $b_3^+(n)$ is even. 
 \end{enumerate}
 In particular, there are infinitely many integers $n\equiv 1\pmod{4}$ for which $c_3^+(n)$ is even (respectively, odd).
\end{thm}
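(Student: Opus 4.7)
The plan is to mimic the Kolberg-style argument that proves \thmref{a3+theorem}, now applied to $g_3^+$ instead of $f_3^+$. Starting from Jacobi's classical identity $\prod_{n\geq 1}(1-q^n)^3=\sum_{n\geq 0}(-1)^n(2n+1)q^{n(n+1)/2}$, reducing modulo $2$ and squaring twice (using $(A+B)^2\equiv A^2+B^2\pmod 2$) gives
\[
\prod_{n=1}^{\infty}(1-q^n)^{12}\equiv \sum_{n\geq 0}q^{2n(n+1)}\pmod 2,
\]
together with the same identity after $q\mapsto q^3$. Multiplying the defining relation $g_3^+(z)\prod(1-q^n)^{12}=q\prod(1-q^{3n})^{12}$ through and reducing mod $2$ yields the key congruence
\[
g_3^+(z)\sum_{n\geq 0}q^{2n(n+1)}\equiv \sum_{n\geq 0}q^{6n(n+1)+1}\pmod 2.
\]

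Combining this with \eqref{g3} and the observation $2n(n+1)\equiv 0\pmod 4$, comparing coefficients of $q^N$ for any $N\equiv 1\pmod 4$ produces the fundamental identity
\[
\sum_{\substack{m\geq 0\\ 2m(m+1)\leq N-1}} b_3^+\bigl(N-2m(m+1)\bigr)\equiv \begin{cases}1, & N=6l(l+1)+1\text{ for some }l\geq 0,\\ 0, & \text{otherwise},\end{cases}\pmod 2,
\]
where $b_3^+(0):=0$.

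For part (a), I set $N=2t(t+1)+1$; the index $m$ runs through $0,1,\ldots,t$, and the extremal term at $m=t$ is the ``anchor'' $b_3^+(1)=1$, read off directly from $g_3^+(z)=q+O(q^2)$. Under the hypothesis $t(t+1)\neq 3l(l+1)$ the right-hand side vanishes mod $2$, forcing the remaining $t$ terms (at $m=0,\ldots,t-1$) to sum to $1\pmod 2$; in particular at least one of them is odd. Their arguments $N-2m(m+1)$ all lie in $1+4\mathbb{Z}$ and range between $N-2(t-1)t=4t+1$ and $N=2t(t+1)+1$. For part (b), I take $N=4t(2t+1)+1$; now $m$ runs through $0,\ldots,2t$, with $m=2t$ again giving $b_3^+(1)=1$. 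Assuming $2t(2t+1)\neq 3l(l+1)$, the RHS vanishes and the remaining $2t$ values must sum to $1\pmod 2$; if all of them were odd the sum would be $\equiv 2t\equiv 0\pmod 2$, a contradiction, so at least one of them is even, with argument in $[8t+1,\,4t(2t+1)+1]$.

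Finally, the ``in particular'' statement follows because each exclusion set corresponds to solutions of a Pell equation: $t(t+1)=3l(l+1)$ is equivalent to $(2t+1)^2-3(2l+1)^2=-2$, and $2t(2t+1)=3l(l+1)$ to $(4t+1)^2-3(2l+1)^2=-2$; the integer solutions of $X^2-3Y^2=-2$ grow exponentially, so both hypotheses hold for infinitely many $t$. I expect the main obstacle to be simply careful bookkeeping: verifying the interval endpoints via the extreme values of $m$, and ensuring that the anchor $b_3^+(1)=1$ flips the parity of the residual sum in opposite directions for (a) and (b) (producing an odd value in the first case and forcing an even value in the second).
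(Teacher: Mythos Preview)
Your proposal is correct and follows essentially the same route as the paper: you derive the congruence of \lemref{5.1} (with the cosmetic change of variables $N=n+1$), then specialize to $N=2t(t+1)+1$ and $N=4t(2t+1)+1$ exactly as the paper specializes to $n=2t(t+1)$ and $n=4t(2t+1)$, using the anchor $b_3^+(1)=1$ in place of the paper's $a_3^+(-1)=1$. The only substantive addition is your explicit Pell-equation justification of the ``in particular'' clause, which the paper leaves implicit; this is correct and a welcome detail, since the exceptional $t$ indeed correspond to solutions of $X^2-3Y^2=-2$ and hence have exponential growth.
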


\noindent
To obtain these results, we prove following lemmas related to the parity of the Fourier coefficients $a_3^+(n)$ and $b_3^+(n)$. 
\begin{lem}\label{a3+cong}
For any integer $n,$ we have
\[ \sum_{k=0}^{\infty}a_{3}^{+}(n-6k(k+1)-1)\equiv
   \begin{cases}
   1 \pmod{2} & ~\text{if}~ n=2l(l+1) ~\text{for some}~ l\in \mathbb{N},\\
   0 \pmod{2} &~ \text{otherwise}.\\
   \end{cases}
       \]
\end{lem}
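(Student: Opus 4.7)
The plan is to recognize the left-hand side as the coefficient of $q^n$ in the product of two generating functions and to identify the right-hand side with the coefficients of a theta series. Setting
$$\psi(q):=\sum_{n\geq 0}q^{n(n+1)/2} = \prod_{n\geq 1}\frac{(1-q^{2n})^2}{1-q^n},$$
we have $\psi(q^{12}) = \sum_{k\geq 0}q^{6k(k+1)}$ and $\psi(q^4) = \sum_{l\geq 0}q^{2l(l+1)}$, so the lemma becomes equivalent to the single congruence
\begin{equation*}
qf_3^+(z)\cdot \psi(q^{12}) \equiv \psi(q^4) \pmod{2}. \tag{$\star$}
\end{equation*}

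To establish $(\star)$, I would begin from $qf_3^+(z) = \prod_{n\geq 1}(1-q^n)^{12}/(1-q^{3n})^{12}$ (immediate from \eqref{f3+def}) and apply the elementary congruence $(1-x)^{2^k}\equiv 1-x^{2^k}\pmod{2}$, splitting $12 = 8+4$, to obtain
$$qf_3^+(z) \equiv \prod_{n\geq 1}\frac{(1-q^{4n})(1-q^{8n})}{(1-q^{12n})(1-q^{24n})}\pmod{2}.$$
Multiplying by the product expansion of $\psi(q^{12})$ and repeatedly invoking $(1-q^m)^2\equiv 1-q^{2m}\pmod{2}$ produces telescoping cancellations that collapse $qf_3^+(z)\cdot \psi(q^{12})$ to $\prod_{n\geq 1}(1-q^{4n})(1-q^{8n})\pmod{2}$. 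On the other hand, from $\psi(q^4) = \prod_{n\geq 1}(1-q^{8n})^2/(1-q^{4n})$ together with the factorization $1-q^{16n} = (1-q^{4n})(1+q^{4n})(1+q^{8n})$ and the mod~$2$ replacement of $(1-q^{8n})^2$ by $1-q^{16n}$, the right-hand side of $(\star)$ simplifies to the same product $\prod_{n\geq 1}(1-q^{4n})(1-q^{8n})\pmod{2}$, which settles $(\star)$.

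Finally, extracting the coefficient of $q^n$ from both sides of $(\star)$ delivers the lemma: the left-hand side of $(\star)$ contributes $\sum_{k\geq 0}a_3^+(n-6k(k+1)-1)$ (with the usual convention that $a_3^+(m)=0$ for $m<-1$), while the right-hand side contributes $1$ exactly when $n=2l(l+1)$ for some $l\geq 0$ and $0$ otherwise. The main obstacle is the careful bookkeeping in the chain of mod~$2$ product manipulations that yields $(\star)$: no single step is deep, but the exponents have to be tracked through several layers of squaring and cancellation before the two sides collapse to a common expression.
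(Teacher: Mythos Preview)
Your argument is correct and follows essentially the same route as the paper: reduce the eta-quotient modulo~$2$ via the freshman's dream, then identify the resulting products with the theta series $\psi(q^4)$ and $\psi(q^{12})$. The only minor difference is in the bookkeeping: the paper writes $(1-q^n)^{12}\equiv (1-q^{4n})^3\pmod{2}$ and then invokes Jacobi's identity $\prod(1-q^n)^3=\sum(-1)^k(2k+1)q^{k(k+1)/2}\equiv\psi(q)\pmod{2}$ directly, which immediately gives $qf_3^+(z)\equiv\psi(q^4)/\psi(q^{12})\pmod{2}$ without any further product manipulation; your $8+4$ split together with Gauss's product formula for $\psi$ reaches the same identity but through a slightly longer chain of cancellations.
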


\begin{proof}
We know that $(x+y)^{2^m}\equiv x^{2^m}+y^{2^m} \pmod{2}$, where $x$ and $y$ are integers. Using this, we have
\begin{equation}\label{etaid1}
\eta^{12}(z)=q^{1/2}\prod_{n=1}^{\infty}(1-q^n)^{12}\equiv q^{1/2}\prod_{n=1}^{\infty}(1-q^{4n})^{3} \pmod{2}.
\end{equation}
Now, using the well-known Jacobi identity
\begin{equation*}
\prod_{n=1}^{\infty}(1-q^n)^3= \sum_{k=1}^{\infty}(-1)^k(2k+1)q^{k(k+1)/2},
\end{equation*}
in the last expression of \eqref{etaid1}, we obtain
\begin{equation}\label{etaid2}
\eta^{12}(z)\equiv q^{1/2}\sum_{n=0}^{\infty}q^{2n(n+1)} \pmod{2}.
\end{equation}
Using the similar arguments, we also get
\begin{equation}\label{etaid3}
\eta^{12}(3z)\equiv q^{3/2}\sum_{n=0}^{\infty}q^{6n(n+1)} \pmod{2}.
\end{equation}
By using \eqref{etaid2} and \eqref{etaid3} in the definition of $f_3^+$ given in \eqref{f3+def}, we get
\begin{equation*}
\sum_{n=-1}^{\infty}a_{3}^{+}(n)q^n\equiv \frac{1}{q}\frac{\sum_{n=0}^{\infty}q^{2n(n+1)}}{\sum_{n=0}^{\infty}q^{6n(n+1)}}\pmod{2}.
\end{equation*}
\noindent
Simplifying above, yields
\begin{equation*}
\sum_{n=0}^{\infty}\left(\sum_{k=0}^{\infty}a_{3}^{+}(n-6k(k+1)-1)\right)q^n\equiv
\sum_{n=0}^{\infty}q^{2n(n+1)} \pmod{2}.
\end{equation*}
Comparing the corresponding coefficients of $q^n$ from the both sides of the above congruence, completes the proof.
\end{proof}
 
\begin{lem}\label{5.1}
For any integer $n\geq 1,$ we have 
\[ \sum_{k=0}^{\infty}b_{3}^{+}(n-2k(k+1)+1)\equiv
   \begin{cases}
   1 \pmod{2} & ~\text{if}~ n=6l(l+1) ~\text{for some}~ l\in \mathbb{N},\\
   0 \pmod{2} &~ \text{otherwise}.\\
   \end{cases}
       \]
\end{lem}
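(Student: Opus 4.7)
The plan is to mirror the proof of \lemref{a3+cong} almost verbatim, only reversing the roles of $\eta^{12}(z)$ and $\eta^{12}(3z)$ in the quotient, so that the extra factor of $q$ in the final identity ends up on the right rather than being absorbed into a $1/q$ on the left.

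First, I would invoke the two congruences already established inside the proof of \lemref{a3+cong}, namely
\begin{equation*}
\eta^{12}(z)\equiv q^{1/2}\sum_{n=0}^{\infty}q^{2n(n+1)} \pmod{2}, \qquad \eta^{12}(3z)\equiv q^{3/2}\sum_{n=0}^{\infty}q^{6n(n+1)} \pmod{2}.
\end{equation*}
Dividing the second by the first and plugging into the definition \eqref{g3+def} of $g_3^+(z)$, the half-integral powers combine to give
\begin{equation*}
\sum_{n=1}^{\infty} b_3^+(n)\,q^n \;\equiv\; q\cdot\frac{\sum_{n=0}^{\infty}q^{6n(n+1)}}{\sum_{n=0}^{\infty}q^{2n(n+1)}} \pmod{2}.
\end{equation*}

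Next I would clear the denominator, obtaining
\begin{equation*}
\Bigl(\sum_{n=1}^{\infty} b_3^+(n)\,q^n\Bigr)\Bigl(\sum_{k=0}^{\infty} q^{2k(k+1)}\Bigr) \;\equiv\; \sum_{l=0}^{\infty} q^{6l(l+1)+1} \pmod{2},
\end{equation*}
and then read off the coefficient of $q^{n+1}$ on both sides. On the left this yields $\sum_{k\geq 0} b_3^+(n+1-2k(k+1))=\sum_{k\geq 0} b_3^+(n-2k(k+1)+1)$, while on the right the coefficient of $q^{n+1}$ is $1$ if $n=6l(l+1)$ for some $l\in\mathbb{N}$ and $0$ otherwise, which is exactly the claimed congruence.

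The argument is essentially bookkeeping, so there is no genuine obstacle; the only point that requires care is the half-integer exponents in $\eta^{12}(z)$ and $\eta^{12}(3z)$. In \lemref{a3+cong} the net effect of these was a factor of $q^{-1}$, which produced the shift $n\mapsto n-6k(k+1)-1$; here the quotient is inverted, so the net effect is a factor of $q^{+1}$, and the shift becomes $n\mapsto n-2k(k+1)+1$, consistent with the fact that $g_3^+$ has no negative-index terms. Once this sign is tracked correctly, matching coefficients of $q^{n+1}$ completes the proof.
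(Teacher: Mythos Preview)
Your proposal is correct and follows exactly the approach the paper intends: the paper's own proof of \lemref{5.1} simply reads ``This follows by the same method as the proof of \lemref{a3+cong},'' and you have carried out precisely that computation, swapping numerator and denominator so that the net power of $q$ becomes $+1$ and then comparing coefficients of $q^{n+1}$.
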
 
\begin{proof}
This follows by the same method as the proof of \lemref{a3+cong}.
\end{proof}
Now we are ready to prove our theorems.
\subsection*{\textit{Proof of \thmref{a3+theorem}} (a).}
Let $n=6t(t+1)$ and so from the assumption $n$ is not of the form $2l(l+1)$ for any $l$. Substituting this value of $n$ in \lemref{a3+cong}, we get
\begin{equation*}
\sum_{k=0}^{\infty}a_{3}^{+}(6t(t+1)-6k(k+1)-1)\equiv 0 \pmod{2}.
\end{equation*}
Note that the above sum is finite since $a_{3}^{+}(n)=0$ if $n<-1$. Thus the above congruence is actually
\begin{equation*}
\sum_{k=0}^{t-1}a_{3}^{+}(6t(t+1)-6k(k+1)-1)+a_{3}^{+}(-1)\equiv 0 \pmod{2}.
\end{equation*}
Equivalently, by using  $a_{3}^{+}(-1)=1,$ we write
\begin{equation*}
\sum_{k=0}^{t-1}a_{3}^{+}(\alpha_{3,t}(k)) \equiv 1 \pmod{2},
\end{equation*}
where  $\alpha_{3,t}(k):=6t(t+1)-6k(k+1)-1$ which is $\equiv 3\pmod{4}.$ 
Hence there exist some $k$ such that  $0\leq k\leq t-1$ and $a_{3}^{+}(\alpha_{3,t}(k))$ is odd. Now to complete the proof, it is sufficient to show that for each  $0\leq k\leq t-1$, $\alpha_{3,t}(k)\in I_{3,t}$. This follows from the fact that $\alpha_{3,t}(k)$ is decreasing function of $k$ and $\alpha_{3,t}(0), \alpha_{3,t}(t-1)\in I_{3,t}.$
\qed

\subsection*{\textit{Proof of \thmref{a3+theorem}} (b).}
We prove the result by contradiction. Suppose that $a_{3}^{+}(m)$ is odd whenever $m\equiv 3\pmod{4}$ and $m\in J_{3,t}.$
Put $n=12t(2t+1)$ in \lemref{a3+cong} to get
\begin{equation*}
\sum_{k=0}^{\infty}a_{3}^{+}(12t(2t+1)-6k(k+1)-1)\equiv 0 \pmod{2}.
\end{equation*}
Using the similar arguments as before, we get
\begin{equation}\label{4.3}
\sum_{k=0}^{2t-1}a_{3}^{+}(\beta_{3,t}(k))\equiv 1 \pmod{2},
\end{equation}
where $\beta_{3,t}(k):=12t(2t+1)-6k(k+1)-1$ which is $\equiv 3\pmod{4}.$ Again, one can easily see that $\beta_{3,t}(k)\in J_{3,t},$ for all $0\leq k\leq 2t-1.$ 
 
Now, the number of terms in the left hand side of \eqref{4.3} is even, namely $2t$ and by the assumption on $a_{3}^{+}(m)$ each term is odd. Therefore, the left hand side of \eqref{4.3} is $\equiv 0\pmod{2},$ which is a contradiction.
\qed

\subsection*{\textit{Proof of the \thmref{b3+theorem}}.}
Putting $n=2t(t+1$)$~(\mbox{respectively}, =4t(2t+1))$ in the \lemref{5.1} and following the method of the proof of \thmref{a3+theorem} (a) (respectively, \thmref{a3+theorem} (b)), we get the result.
\qed

 \section{Parity of the Fourier coefficients of $j_{7}^{+}(z)$}
In this section we investigate the parity of the coefficients $c_{7}^{+}(n)$. Form the definition \eqref{jN+}, we have
\begin{align}\label{j7+}
j_{7}^{+}(z)&
\equiv\left(\frac{\eta(z)}{\eta(7z)} \right)^{4}+\left(\frac{\eta(7z)}{\eta(z)} \right)^{4} \pmod{2}.
\end{align}
To make our calculations easy, we introduce two functions $f_{7}^{+}$ and $g_{7}^{+}$ with Fourier coefficients $a_7^+(n)$ and $b_7^+(n)$ respectively (as in the case of $j_{3}^{+}(z)$), 
\begin{equation}\label{f7g7}
f_{7}^{+}(z):=\left(\frac{\eta(z)}{\eta(7z)}\right)^{4} {\rm ~and~}
g_{7}^{+}(z):=\left(\frac{\eta(7z)}{\eta(z)}\right)^{4}.
\end{equation}
From \eqref{j7+} and \eqref{f7g7}, it is clear that to determine the parity of $c_7^+(n)$, it is sufficient to know the parity of $a_7^+(n)$ and $b_7^+(n)$. More precisely, for all $n\geq 1$, we have
$$c_{7}^{+}(n)\equiv a_{7}^{+}(n)+b_{7}^{+}(n)\pmod{2}.$$ 
 Now simplifying the eta quotients, we get
\begin{equation*}
f_{7}^{+}(z)=\frac{1}{q}\prod_{n=1}^{\infty}\frac{(1-q^n)^{4}}{(1-q^{7n})^{4}}\equiv \sum_{n=-1}^{\infty}a_{7}^{+}(4n+3)q^{4n+3} \pmod{2},
\end{equation*}
and similarly
\begin{equation*}
g_{7}^{+}(z)\equiv \sum_{n=1}^{\infty}b_{7}^{+}(4n+1)q^{4n+1} \pmod{2}.
\end{equation*}
Note that $a_{7}^{+}(n)$ and $b_{7}^{+}(n)$ are even whenever $n\nequiv 3\pmod{4}$ and 
$n\nequiv 1\pmod{4}$ respectively. Combining the above discussions, we get
\[ c_{7}^{+}(n) \equiv
   \begin{cases}
    a_{7}^{+}(n) \pmod{2} & ~\text{if}~ n\equiv 3\pmod{4},\\
    b_{7}^{+}(n) \pmod{2} & ~\text{if}~ n\equiv 1 \pmod{4},\\
    0 ~~~~~~\pmod{2} &~ \text{otherwise}.\\
   \end{cases} 
     \]

For $n\equiv 0,2 \pmod{4},~c_{7}^{+}(n)$ is even. It is natural to investigate the parity of $c_{7}^{+}(n)$ in the arithmetic progression $n\equiv 1\pmod{4}$ and $ n\equiv 3 \pmod{4}.$ The following theorems answer these questions. 

\begin{thm}\label{a7+id}
Let $t$ be a positive integer.
\begin{enumerate}[label=(\alph{*})]
\item
Assume that $7t(3t-1)$ is neither of the form $l(3l-1)$ nor $l(3l+1)$ for any $l \in \mathbb{N}.$ Then
the interval $I_{7,t}:=[56t-29,14t(3t-1)-1]$ contains an integer $n\equiv 3\pmod{4}$, with $a_7^+(n)$ odd. 
\item
Assume that $14t(6t-1)$ is neither of the form $l(3l-1)$ nor $l(3l+1)$ for any $l \in \mathbb{N}.$ Then
the interval $J_{7,t}:=[112t-29,28t(6t-1)-1]$ contains an integer $n\equiv 3\pmod{4}$, with $a_7^+(n)$ even.  
 \end{enumerate}
 In particular, $c_{7}^{+}(n)$ takes both even and odd values infinitely often in the arithmetic progression $n\equiv 3\pmod{4}.$
\end{thm}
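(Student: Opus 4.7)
The plan is to mirror the proof of \thmref{a3+theorem} but with Euler's pentagonal number theorem in place of Jacobi's identity. Since $(1-q^n)^4 \equiv 1-q^{4n} \pmod{2}$, we have $\prod(1-q^n)^4 \equiv \prod(1-q^{4n}) \equiv \sum_{k\in\mathbb{Z}}q^{2k(3k-1)} \pmod{2}$, and likewise $\prod(1-q^{7n})^4 \equiv \sum_{k\in\mathbb{Z}}q^{14k(3k-1)} \pmod{2}$. Substituting into the definition of $f_7^+(z)$, cross-multiplying by $q$, and comparing the coefficient of $q^N$ for $N \geq 0$ yields the key congruence
$$\sum_{k\in\mathbb{Z}}a_7^+\bigl(N-1-14k(3k-1)\bigr) \equiv \begin{cases} 1 \pmod{2} & \text{if } N = 2l(3l-1) \text{ or } 2l(3l+1) \text{ for some } l \geq 0, \\ 0 \pmod{2} & \text{otherwise,} \end{cases}$$
which plays the role of \lemref{a3+cong}.

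For (a), I would specialize at $N = 14t(3t-1)$, under which the hypothesis forces the right-hand side to vanish. A short computation confirms that the argument $-1$ is produced in the bilateral sum only by $k = t$ in the first branch: the alternative $t(3t-1) = j(3j+1)$ factors as $(t+j)\bigl(3(t-j)-1\bigr) = 0$, impossible for positive integers. Hence $a_7^+(-1)=1$ appears exactly once, the remaining $2t-1$ terms sum to $1 \pmod{2}$, and at least one must be odd. Checking that all these arguments are $\equiv 3 \pmod{4}$ and lie in $I_{7,t}$ (with the lower endpoint $56t-29$ realized by the second-branch contribution at $k = t-1$) completes the proof.

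For (b), I would apply the lemma at $N = 28t(6t-1) = 2 \cdot 14t(6t-1)$, where the hypothesis again annihilates the right-hand side and $a_7^+(-1) = 1$ enters precisely once (from $k = 2t$ in the first branch). The remaining arguments lie in $J_{7,t}$, are $\equiv 3 \pmod{4}$, and sum to $1 \pmod{2}$; assuming $a_7^+$ is odd at each should, as in \thmref{a3+theorem}(b), yield a parity contradiction. The main obstacle I expect is precisely this parity count: Euler's pentagonal identity is bilateral, so unlike the one-sided Jacobi expansion of \lemref{a3+cong} which gave exactly $2t$ terms, here both the $k > 0$ and $k < 0$ branches contribute, and delicate bookkeeping over the two pentagonal branches is needed to obtain the correct parity of the number of active terms in $J_{7,t}$.
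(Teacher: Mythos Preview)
Your approach is essentially identical to the paper's: the same key congruence (the paper's \lemref{a7+cong}), the same specialisations $n=14t(3t-1)$ for (a) and $n=28t(6t-1)$ for (b), and the same contradiction structure. Your treatment of (a) is correct and matches the paper line for line --- the $2t-1$ remaining arguments you describe are exactly the paper's $\alpha_{7,t}(k)$ for $0\le k\le t-1$ together with $\beta_{7,t}(k)$ for $1\le k\le t-1$, and the lower endpoint $56t-29=\beta_{7,t}(t-1)$ is checked just as you indicate.

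Your parity worry for (b) is well-founded, and it is worth comparing precisely with what the paper does. The paper states \lemref{a7+cong} with \emph{both} one-sided sums indexed from $k=0$; under that indexing, after peeling off $a_7^+(\gamma_{7,t}(2t))=a_7^+(-1)=1$ and $a_7^+(\delta_{7,t}(2t))=0$, there remain exactly $4t$ summands, and the paper concludes by the parity of $4t$. Your bilateral sum counts the $k=0$ contribution once rather than twice, leaving $4t-1$ summands, and then the parity contradiction does not close --- exactly the obstacle you anticipate. Observe, though, that in the paper's own derivation of \lemref{a7+cong} the denominator series is written with the second branch starting at $k=1$, so the second sum in the lemma's statement should really run over $k\ge 1$; the paper's ``$4t$ terms'' thus comes from this same double-count of $k=0$, and the bookkeeping issue you flag is present there too. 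A clean way to make the argument airtight within your bilateral framework is to specialise instead at $n=28t(6t+1)=14\cdot 2t\cdot(3\cdot 2t+1)$: then $a_7^+(-1)$ arises uniquely from $k=-2t$, the remaining active indices are $k=-(2t-1),\dots,0,\dots,2t$, giving $4t$ terms, and the contradiction goes through (with a correspondingly shifted interval and hypothesis on $14t(6t+1)$).
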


\begin{thm}\label{b7+id}
Let $t$ be a positive integer.
\begin{enumerate}[label=(\alph{*})]
\item
Assume that $t(3t-1)$ is neither of the form $7l(3l-1)$ nor $7l(3l+1)$ for any $l \in \mathbb{N}.$ Then
the interval $I'_{7,t}:=[8t-3,2t(3t-1)+1]$ contains an integer $n\equiv 1\pmod{4}$, with $b_7^+(n)$ odd. 
\item
Assume that $2t(3t-1)$ is neither of the form $7l(3l-1)$  nor $7l(3l+1)$ for any $l \in \mathbb{N}.$ Then
the interval $J'_{7,t}:=[16t-3,4t(6t-1)+1]$ contains an integer $n\equiv 1\pmod{4}$, with $b_7^+(n)$ even.    
\end{enumerate}  
 In particular, $c_{7}^{+}(n)$ takes both even and odd values infinitely often in the arithmetic progression $n\equiv 1\pmod{4}.$
\end{thm}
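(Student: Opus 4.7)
The plan is to mimic the proof of Theorem~\ref{b3+theorem}, with the $\eta$-power $12$ replaced by $4$. The first step is to establish the $b_{7}^{+}$-analogue of \lemref{5.1}. Using $(1-q^n)^4\equiv 1-q^{4n}\pmod 2$ together with Euler's pentagonal number theorem one finds
\[
\eta(z)^{4}\equiv q^{1/6}\sum_{k\in\mathbb{Z}}q^{2k(3k-1)}\pmod 2,\qquad \eta(7z)^{4}\equiv q^{7/6}\sum_{k\in\mathbb{Z}}q^{14k(3k-1)}\pmod 2.
\]
Clearing denominators in $g_{7}^{+}(z)=\eta(7z)^{4}/\eta(z)^{4}$ and comparing coefficients of $q^{n}$ yields the recursive congruence
\[
\sum_{k\in\mathbb{Z}}b_{7}^{+}\bigl(n+1-2k(3k-1)\bigr)\equiv
\begin{cases}
1\pmod 2 & \text{if }n=14l(3l-1)\text{ or }14l(3l+1)\text{ for some }l\in\mathbb{N},\\
0\pmod 2 & \text{otherwise.}
\end{cases}
\]
Note also that $k(3k-1)$ is always even, so every summand index satisfies $n+1-2k(3k-1)\equiv n+1\pmod 4$; this is the source of the congruence condition $\equiv 1\pmod 4$ in the conclusion.

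For part~(a) I would substitute $n=2t(3t-1)$ into the displayed congruence. The hypothesis that $t(3t-1)$ is neither $7l(3l-1)$ nor $7l(3l+1)$ forces the right-hand side to vanish. Since $g_{7}^{+}(z)=q+O(q^{2})$, the identity $b_{7}^{+}(1)=1$ provides a known odd boundary term appearing at $k=t$, and the remaining real summands have indices in $I'_{7,t}=[8t-3,\,2t(3t-1)+1]$ (the extremes realised at $k=0$ and $k=-(t-1)$). Transposing the boundary then forces the non-boundary sum to be $\equiv 1\pmod 2$, so at least one of these $b_{7}^{+}$-values in $I'_{7,t}$ must be odd.

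For part~(b) I would choose the substitution yielding the interval $J'_{7,t}=[16t-3,\,4t(6t-1)+1]$, use the hypothesis on $2t(3t-1)$ to again kill the right-hand side of the congruence, and then argue by contradiction: if every $b_{7}^{+}(n)$ at $n\equiv 1\pmod 4$ in $J'_{7,t}$ were odd, then the known value $b_{7}^{+}(1)=1$ together with the (assumed odd) non-boundary contributions would produce a sum whose parity contradicts the lemma. The main obstacle is precisely the book-keeping required here: because the pentagonal number theorem gives a two-sided sum indexed by $k\in\mathbb{Z}$ (in contrast to the one-sided Jacobi expansion used in the $b_{3}^{+}$ case), I must count the real summands coming from $k\ge 0$ and $k<0$ separately and verify that their total has the correct parity to be incompatible with the ``all odd'' hypothesis. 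This counting is what explains the apparently asymmetric roles of $t(3t-1)$ in (a) and $2t(3t-1)$ in (b).
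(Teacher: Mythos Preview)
Your approach is the paper's approach. The congruence you derive is exactly Lemma~\ref{b7+cong} written as a single sum over $k\in\mathbb{Z}$ (the paper splits it via $k\mapsto -k$ into two half-sums over $k\geq 0$), and your substitutions $n=2t(3t-1)$ and $n=4t(6t-1)$ correspond, after the shift $n\mapsto n+1$, to the paper's $n=2t(3t-1)+1$ and $n=4t(6t-1)+1$. The paper's proof is a one-line reference to the method of Theorem~\ref{a7+id}, and your sketch simply unpacks those same steps; for part~(a) your interval bookkeeping (extremes at $k=0$ and $k=-(t-1)$, boundary $b_7^+(1)=1$ at $k=t$) is correct.

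Your hesitation about the term-count in~(b) is exactly the right place to pause. With your $\mathbb{Z}$-indexed sum at $n=4t(6t-1)$, the nonzero contributions come from $k\in\{-(2t-1),\dots,2t\}$; after peeling off the boundary $k=2t$ (which gives $b_7^+(1)=1$) you are left with $4t-1$ terms---an odd number---so ``all odd'' yields a sum $\equiv 1\pmod 2$, which is \emph{consistent} with the lemma rather than contradictory. The paper avoids this because its split formulation of Lemma~\ref{b7+cong} runs \emph{both} half-sums from $k=0$, so the $k=0$ term $b_7^+(n)$ is counted twice; this makes the non-boundary count $4t$ (even) and the contradiction goes through. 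In your single-sum formulation that double appearance is absent, so to match the paper's argument you must either revert to the split form or otherwise arrange for the non-boundary count to be even.
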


We need the following lemmas to prove \thmref{a7+id} and \thmref{b7+id}. 
\begin{lem}\label{a7+cong}
For any integer $n,$ we have
\begin{align}
&\sum_{k=0}^{\infty}a_{7}^{+}(n-14k(3k-1)-1)+\sum_{k=0}^{\infty}a_{7}^{+}(n-14k(3k+1)-1) \notag\\
& \equiv 
   \begin{cases}
   1 \pmod{2} & ~\text{if}~ n=2l(3l-1)~\text{or}~n=2l(3l+1)~ ~\text{for some}~ l\in \mathbb{N},\\
   0 \pmod{2} &~ \text{otherwise}.\\
   \end{cases}
\end{align}    
\end{lem}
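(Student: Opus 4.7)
The plan is to mirror the proof of \lemref{a3+cong}, with Euler's pentagonal number theorem playing the role of Jacobi's identity for $\prod(1-q^{n})^{3}$. Since $(1-q^{n})^{2}\equiv 1-q^{2n}\pmod{2}$, squaring yields $(1-q^{n})^{4}\equiv 1-q^{4n}\pmod{2}$, whence
$$
\eta^{4}(z)=q^{1/6}\prod_{n=1}^{\infty}(1-q^{n})^{4}\equiv q^{1/6}\prod_{n=1}^{\infty}(1-q^{4n})\pmod{2},
$$
and analogously $\eta^{4}(7z)\equiv q^{7/6}\prod_{n=1}^{\infty}(1-q^{28n})\pmod{2}$.

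Next I would invoke Euler's pentagonal number theorem
$$
\prod_{n=1}^{\infty}(1-q^{n})=\sum_{k=-\infty}^{\infty}(-1)^{k}q^{k(3k-1)/2}.
$$
Separating the $k=0$, $k\geq 1$, and $k\leq -1$ contributions and reducing modulo $2$, this gives
$$
\prod_{n=1}^{\infty}(1-q^{4n})\equiv 1+\sum_{k=1}^{\infty}q^{2k(3k-1)}+\sum_{k=1}^{\infty}q^{2k(3k+1)}\pmod{2},
$$
together with the analogous congruence for $\prod(1-q^{28n})$ in which each $2k$ is replaced by $14k$. These identities take the place of \eqref{etaid2} and \eqref{etaid3} from the $N=3$ case, except that two families of exponents now appear in place of one.

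I would then substitute these into $f_{7}^{+}(z)=q^{-1}\prod(1-q^{n})^{4}/\prod(1-q^{7n})^{4}$ and clear the denominator to obtain
$$
q\cdot f_{7}^{+}(z)\cdot\prod_{n=1}^{\infty}(1-q^{28n})\equiv\prod_{n=1}^{\infty}(1-q^{4n})\pmod{2}.
$$
Writing $q\cdot f_{7}^{+}(z)=\sum_{m\geq 0}a_{7}^{+}(m-1)q^{m}$, multiplying out and extracting the coefficient of $q^{n}$ on each side produces the stated identity, with the right-hand indicator reflecting whether $n$ lies in the image of the sequences $2k(3k-1)$ or $2k(3k+1)$.

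The step I expect to be the main obstacle is this final coefficient comparison. Because Euler's identity produces two distinct pentagonal series rather than the single family of triangular-type exponents that drives the proof of \lemref{a3+cong}, contributions from both $14k(3k-1)$ and $14k(3k+1)$ on the denominator side must be tracked simultaneously, and the $k=0$ term of each family must be handled carefully so that the left-hand side takes the symmetric form recorded in the lemma.
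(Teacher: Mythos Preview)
Your proposal is correct and follows essentially the same route as the paper's proof: reduce $(1-q^{n})^{4}\equiv 1-q^{4n}\pmod{2}$, apply Euler's pentagonal number theorem to $\prod(1-q^{4n})$ and $\prod(1-q^{28n})$, substitute into $f_{7}^{+}(z)=q^{-1}\prod(1-q^{n})^{4}/\prod(1-q^{7n})^{4}$, clear the denominator, and compare coefficients of $q^{n}$. The care you anticipate with the $k=0$ terms is exactly the only bookkeeping point, since the $k=0$ contribution appears once in the actual Cauchy product but twice in the symmetric form of the lemma, and the discrepancy vanishes modulo~$2$.
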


\begin{proof}
\noindent
Using the product identity 
$\displaystyle{\prod_{n=1}^{\infty}(1-q^n)=\sum_{n=-\infty}^{\infty}(-1)^nq^{\frac{n(3n-1)}{2}}}$, we obtain
\begin{equation}\label{a7+id2}
\prod_{n=1}^{\infty}(1-q^{4n})=\sum_{n=-\infty}^{\infty}(-1)^nq^{2n(3n-1)}
\equiv \sum_{n=-\infty}^{\infty}q^{2n(3n-1)} \pmod{2}
\end{equation}
and
\begin{equation}\label{a7+id3}
\prod_{n=1}^{\infty}(1-q^{28n})=\sum_{n=-\infty}^{\infty}(-1)^nq^{14n(3n-1)}
\equiv \sum_{n=-\infty}^{\infty}q^{14n(3n-1)} \pmod{2}.
\end{equation}
Also form \eqref{f7g7}, we have 
\begin{align}\label{a7+id4}
f_{7}^{+}(z)
&=\frac{\eta^{4}(z)}{\eta^{4}(7z)} = \frac{1}{q}\prod_{n=1}^{\infty}\frac{(1-q^n)^4}{(1-q^{7n})^4} \equiv\frac{1}{q}\prod_{n=1}^{\infty}\frac{(1-q^{4n})}{(1-q^{28n})} \pmod{2}.
\end{align}
\noindent
Substituting the congruences  \eqref{a7+id2} and \eqref{a7+id3} in  \eqref{a7+id4}, we get
\begin{equation*}
\sum_{n=-1}^{\infty}a_{7}^{+}(n)q^{n}\equiv  \frac{\sum_{n=0}^{\infty}q^{2n(3n-1)}+\sum_{n=1}^{\infty}q^{2n(3n+1)}}{\sum_{n=0}^{\infty}q^{14n(3n-1)+1}+\sum_{n=1}^{\infty}q^{14n(3n+1)+1}} \pmod{2}.
\end{equation*} 
\noindent
Multiplication of series in above congruence gives
\begin{align*}
& \sum_{n=0}^{\infty}\left(\sum_{k=0}^{\infty}a_{7}^{+}(n-14k(3k-1)-1)+\sum_{k=0}^{\infty}a_{7}^{+}(n-14k(3k+1)-1)\right) q^{n}\\
& \equiv  \left(\sum_{n=0}^{\infty}q^{2n(3n-1)}+\sum_{n=1}^{\infty}q^{2n(3n+1)}\right) \pmod{2}.
\end{align*} 
Now comparing the corresponding coefficients of $q^n$ form the both sides of the above congruence, gives the required result.
\end{proof}
\begin{lem}\label{b7+cong}
For any integer $n\geq 1,$ we have
\begin{align}
&\sum_{k=0}^{\infty}b_{7}^{+}(n-2k(3k-1))+\sum_{k=0}^{\infty}b_{7}^{+}(n-2k(3k+1))\notag\\
&\equiv 
   \begin{cases}
   1 \pmod{2} & ~\text{if}~ n=14l(3l-1)+1~\text{or}~n=14l(3l+1)+1~ ~\text{for some}~ l,\\
   0 \pmod{2} &~ \text{otherwise}.\\
   \end{cases}
\end{align}    
\end{lem}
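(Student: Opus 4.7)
The plan is to mimic the proof of \lemref{a7+cong} essentially verbatim, replacing the eta quotient $f_7^+=\eta^4(z)/\eta^4(7z)$ by its reciprocal $g_7^+=\eta^4(7z)/\eta^4(z)$. First I would start from the product form
\[
g_{7}^{+}(z)= q\prod_{n=1}^{\infty}\frac{(1-q^{7n})^{4}}{(1-q^{n})^{4}},
\]
and apply the Frobenius-type congruence $(1-q^n)^{4}\equiv (1-q^{4n})\pmod{2}$ (already used implicitly in \eqref{a7+id4}) to both numerator and denominator, obtaining
\[
g_{7}^{+}(z)\equiv q\,\frac{\prod_{n=1}^{\infty}(1-q^{28n})}{\prod_{n=1}^{\infty}(1-q^{4n})}\pmod{2}.
\]
Note that compared to the $f_{7}^{+}$ case, the roles of numerator and denominator are swapped and the leading power of $q$ is $q^{+1}$ instead of $q^{-1}$.

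Next I would clear denominators and invoke Euler's pentagonal number theorem, exactly as in \eqref{a7+id2} and \eqref{a7+id3}, to rewrite both infinite products as bilateral theta-type sums modulo $2$. This gives the mod-$2$ identity
\[
\Bigl(\sum_{m\geq 1} b_{7}^{+}(m)q^{m}\Bigr)\sum_{n\in\mathbb{Z}} q^{2n(3n-1)} \equiv q\sum_{n\in\mathbb{Z}} q^{14n(3n-1)} \pmod{2}.
\]
Splitting each bilateral sum into contributions from $n\geq 0$ and from $n\geq 1$ (after sending $n\mapsto -n$), one obtains
\[
\Bigl(\sum_{m\geq 1}b_{7}^{+}(m)q^{m}\Bigr)\Bigl(\sum_{n\geq 0}q^{2n(3n-1)}+\sum_{n\geq 1}q^{2n(3n+1)}\Bigr) \equiv \sum_{n\geq 0}q^{14n(3n-1)+1}+\sum_{n\geq 1}q^{14n(3n+1)+1}\pmod{2}.
\]
Comparing coefficients of $q^{n}$ on both sides yields the asserted congruence (with the $k=0$ term of the second inner sum either absorbed into the first or, if written as starting at $k=0$, cancelling mod $2$ against the $k=0$ term of the first, since $2k(3k-1)=2k(3k+1)=0$ there).

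The only real obstacle is bookkeeping: one must take care that the split of the bilateral pentagonal sum aligns correctly with the index conventions adopted in the lemma statement, so that the right-hand side exponents $14l(3l-1)+1$ and $14l(3l+1)+1$ account for all and only those $n$'s for which the sum on the left is odd. Once this indexing is checked, the argument is a direct transcription of the proof of \lemref{a7+cong}, and no new analytic or arithmetic ingredient is required.
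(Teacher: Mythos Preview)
Your proposal is correct and is exactly the approach the paper takes: the paper's own proof of \lemref{b7+cong} consists of the single sentence ``the proof follows by using similar arguments as in the proof of \lemref{a7+cong},'' and your outline carries out precisely that transcription, swapping numerator and denominator and replacing $q^{-1}$ by $q^{+1}$. Your remark about the $k=0$ bookkeeping is also apt, since the same indexing convention is used (tacitly) in the paper's statement and proof of \lemref{a7+cong}.
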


\begin{proof}
The proof follows by using similar arguments as in the proof of \lemref{a7+cong} and we leave it to the reader.
\end{proof}

\subsection*{\textit{Proof of the \thmref{a7+id}} (a).}
Suppose, contrary to our claim, that $a_{7}^{+}(m)$ is even for all $m\in I_{7,t}.$ Putting $n=14t(3t-1)$ in the \lemref{a7+cong} and using the hypothesis of the theorem, we obtain
\begin{equation*}
\sum_{k=0}^{\infty}a_{7}^{+}\left(14t(3t-1)-14k(3k-1)-1 \right)+ \sum_{k=0}^{\infty} a_{7}^{+}(14t(3t-1)-14k(3k+1)-1)\equiv 0 \pmod{2}.
\end{equation*}
Above congruence can be written as
\begin{equation}\label{a7+sum1}
 \sum_{k=0}^{\infty} a_{7}^{+}\left(\alpha_{7,t}(k)\right) + \sum_{k=0}^{\infty} a_{7}^{+}(\beta_{7,t}(k)) \equiv 0 \pmod{2},
 \end{equation}
where $\alpha_{7,t}(k)=14t(3t-1)-14k(3k-1)-1$ and $\beta_{7,t}(k)=14t(3t-1)-14k(3k+1)-1.$

Since $a_{7}^{+}(m)=0$ for $m<-1$, so the sum on the left hand side of \eqref{a7+sum1} is  finite and the maximum value of $k$ in the sum will be $t$. Therefore, we write the congruence \eqref{a7+sum1} as 
\begin{equation*}
\left(\sum_{k=0}^{t-1} \left\{ a_{7}^{+}(\alpha_{7,t}(k))+a_{7}^{+}(\beta_{7,t}(k))\right\} \right)
+a_{7}^{+}(\alpha_{7,t}(t))+a_{7}^{+}(\beta_{7,t}(t))\equiv 0 \pmod{2}.
\end{equation*}
But $a_{7}^{+}(\alpha_{7,t}^{+}(t))=a_{7}^+(-1)=1$ and $a_{7}^{+}(\beta_{7,t}(t))=a_{7}^{+}(-28t-1)=0$ for any $t\in \mathbb{N}.$
\noindent
Thus, we have 
\begin{equation}\label{a7+rel}
\sum_{k=0}^{t-1}\left\{ a_{7}^{+}(\alpha_{7,t}(k))+a_{7}^{+}(\beta_{7,t}(k))\right\}
\equiv 1\pmod{2}.
\end{equation}
Clearly, for a fix $t$, $\alpha_{7,t}$ and $\beta_{7,t}$ are decreasing functions. In view of  $\alpha_{7,t}(0),\alpha_{7,t}(t-1),\beta_{7,t}(0),\beta_{7,t}(t-1) \in I_{7,t}$, it follows that
$$\alpha_{7,t}(k),~ \beta_{7,t}(k)\in I_{7,t},~~ {\rm for ~ all} ~0\leq k\leq t-1.$$ 
Now the assumption on $a_7^+(m)$ implies that the left hand side of \eqref{a7+rel} is $\equiv 0 \pmod{2},$ which is a contradiction. 
\qed
 
 \subsection*{\textit{Proof of the \thmref{a7+id}} (b).}
 Assume on the contrary that $a_{7}^{+}(m)$ is odd for every $m\in J_{7,t}$ and $m\equiv 3\pmod{4}$.
As before, putting $n=28t(6t-1)$ in the \lemref{a7+cong} and using the hypothesis of the theorem, we have
\begin{equation*}
\sum_{k=0}^{\infty}a_{7}^{+}(28t(6t-1)-14k(3k-1)-1)+\sum_{k=0}^{\infty}a_{7}^{+}(28t(6t-1)-14k(3k+1)-1)\equiv 0 \pmod{2}.
\end{equation*}
For $k\in \mathbb{N}\cup \{0\},$ we denote,
$\gamma_{7,t}(k)=28t(6t-1)-14k(3k-1)-1$ and $\delta_{7,t}(k)=28t(6t-1)-14k(3k+1)-1.$ 
Using these notations, we write the above congruence, as
\begin{equation*}
\sum_{k=0}^{\infty}a_{7}^{+}(\gamma_{7,t}(k))+\sum_{k=0}^{\infty}a_{7}^{+}(\delta_{7,t}(k))\equiv 0 \pmod{2}.
\end{equation*}
The maximum value of $k$ in the above sum will be $2t$ because after that $a_{7}^{+}(\gamma_{7,t}(k))$ and $a_{7}^{+}(\delta_{7,t}(k))$ are zero.
Therefore the above congruence becomes
\begin{equation}\label{a7+sum}
\left(\sum_{k=0}^{2t-1}\left\{a_{7}^{+}(\gamma_{7,t}(k))+a_{7}^{+}(\delta_{7,t}(k))\right\}\right)
+a_{7}^{+}(\gamma_{7,t}(2t))+a_{7}^{+}(\delta_{7,t}(2t))\equiv 0 \pmod{2}. 
\end{equation}
Note that $a_{7}^{+}(\gamma_{7}^{+}(2t))=a_{7}^{+}(-1)=1$ and $a_{7}^{+}(\delta_{7,t}(2t))=a_{7}^{+}(-56t-1)=0$ for any $t\in \mathbb{N}.$
\noindent
Substituting these values in \eqref{a7+sum}, we get
\begin{equation}\label{a7+rel1}
\sum_{k=0}^{2t-1}\left\{a_{7}^{+}(\gamma_{7,t}(k))+a_{7}^{+}(\delta_{7,t}(k))\right\}
\equiv 1\pmod{2}.
\end{equation}
Arguing as before, it follows that for all $0\leq k\leq 2t-1$,
$\gamma_{7,t}(k), \delta_{7,t}(k)\in J_{7,t}.$

Now, the assumption on $a_{7}^{+}(m)$ and the fact that the sum on the left hand side of the \eqref{a7+rel1} has even number of terms gives a contradiction.
\qed
\subsection*{\textit{Proof of the \thmref{b7+id}}.}
Putting $n=2t(3t-1)+1~(\mbox{respectively},=4t(6t-1)+1)$ in the \lemref{b7+cong} and  following the method of the proof of the \thmref{a7+id} (a) (respectively, \thmref{a7+id} (b)), we get the results.
\qed

\section{Parity of the Fourier coefficients of $j_N(z)$}
In the following table, we give analogues results about the parity of Fourier coefficients of $j_N(z)$ for $N=2,3,4,5,7$ and $13.$ The proof of these statements follow by using the same techniques as we have used in the sections $2,3$ and $4$. We remark that $c_{N}(n)$ is even if $n$ is not in the arithmetic progression mentioned in the second column.

\bigskip

\noindent
\begin{tabular}{ |p{2.5cm}|p{3cm}|p{5cm}|p{5cm}|  }

 \hline
Coefficients of $j_N(z)$ & Arithmetic progressions & interval which contains a positive integer $n$ with $c_{N}(n)$ odd  & interval which contains a positive integer $n$ with $c_{N}(n)$ even\\
 \hline
$c_2(n)$ & $n\equiv7\pmod 8$   & $[t,4t(t+1)-1],$ &   $[16t-1,(4t+1)^2-1].$  \\ 
 & & &\\
 \hline
 $c_3(n)$ & $n\equiv 3\pmod 4$   &  $[12t-1,6t(t+1)-1]$,   & $[24t-1,12t(2t+1)-1]$, \\
  &   &  $3t(t+1)\neq l(l+1)$ for any $l,$ &  $ \!6t(2t+1)\! \neq \! l(l+1)$ for any $l.$\\
 & & &\\
 \hline
$c_4(n)$ & $n\equiv7\pmod 8$   & $[t,4t(t+1)-1],$ &   $[16t-1,(4t+1)^2-1].$  \\ 
 & & &\\
 \hline
 $c_5(n)$ & $n\equiv 1 \pmod 2$  & $[10t-1,5t(t+1)-1]$, &  $[20t-1,10t(2t+1)-1]$,  \\
 & &  $5t(t+1)\neq l(l+1)$ for any $l,$  & $\! 10t(2t \! +\! 1)\! \neq \! l(l\! +\! 1)$ for any $l.$ \\
 &&&\\
 \hline
 $c_7(n)$ & $n\equiv 3\pmod 4$ & $[56t-29,14t(3t-1)-1]$, &  $[112t-29,28t(6t-1)-1]$, \\
  & &  $7t(3t-1)\neq l(3l-1)\neq l(3l+1)$ for any $l,$ &   $14t(6t-1)\neq l(3l-1)\neq l(3l+1)$ for any $l.$\\
    &&&\\
  \hline
  $c_{13}(n)$ & $n\equiv 1 \pmod 2$ & $[52t-27,13t(3t-1)-1]$, $13t(3t-1)\neq l(3l-1)\neq l(3l+1)$ for any $l,$  & $[104t-27,26t(6t-1)-1]$, $26t(6t-1)\neq l(3l-1)\neq l(3l+1)$ for any $l.$\\
    &&&\\
 \hline 
\end{tabular}
$$\quad $$

\section{Further Remark}
For the proof of our results, we rely on the fact that for $N=2,3,4,5,7$ and $13$ the corresponding hauptmoduln $j_N(z)$ and  $j_{N}^{+}(z)$ have explicit expression in terms of $\eta$-function. Also by using the method of this paper, we could not conclude any result about the parity of the coefficients $c_{N}^{+}(n)$ for $N=5$ and $13$. It would be interesting to extend these results for congruence subgroup and Fricke groups for higher level. Moreover, one may even ask to get some shorter intervals having the same properties as in the statements of this paper.

Similar to the Klein's $j$-function, we expect that the Fourier coefficients of  $j_N(z)$ and  $j_{N}^{+}(z)$ in some arithmetic progression, are both even and odd with density $\frac{1}{2}$.

\bigskip 
 
\noindent{\bf Acknowledgements.} The first author is grateful to her advisor Dr. Brundaban Sahu for his continuous support and making some important corrections in an earlier version of this paper. The authors thank Dr. Jaban Meher for some comments.

\end{document}